\numberwithin{equation}{section}
\newcommand{\N}{\mathbb{N}}
\newcommand{\R}{\mathbb{R}}
\newcommand{\Cm}{\mathbb{C}}
\newcommand{\eps}{\varepsilon}
\newcommand{\B}{\mathcal{B}}
\newcommand{\F}{\mathcal{F}}
 \DeclareMathOperator{\re}{Re}
\DeclareMathOperator{\supp}{supp} 
\DeclareMathOperator{\Id}{Id}
\renewcommand{\phi}{\varphi}
\newtheorem{Thm}{Theorem}[section]
\newtheorem{theorem}[Thm]{Theorem}
\newtheorem{lemma}[Thm]{Lemma}
\newtheorem{proposition}[Thm]{Proposition}
\begin{document}
\sloppy

\title[]{Exponential lower bound for the eigenvalues of the time-frequency localization operator before the plunge region}
\author{Aleksei Kulikov}
\address{Tel Aviv University, School of Mathematical Sciences, Tel Aviv, 69978, Israel,
\newline {\tt lyosha.kulikov@mail.ru} 
}
\thanks{{\it Key words and phrases:}  prolate spheroidal wave functions, Bargmann--Segal--Fock space, Hermite functions}
\begin{abstract} { We prove that the eigenvalues $\lambda_n(c)$ of the time-frequency localization operator satisfy $\lambda_n(c) > 1 -\delta^c$ for $n = [(1-\eps)c]$, where $\delta = \delta(\eps) < 1$ and $\eps > 0$ is arbitrary,  improving on the result of Bonami, Jaming and Karoui, who proved it for $\eps \ge 0.42$. The proof is based on the properties of the Bargmann transform.
}
\end{abstract}
\maketitle
\section{Introduction}
For a measurable set $\Omega \subset \R$ we define the projection $P_\Omega:L^2(\R)\to L^2(\R)$ as $P_\Omega f = f\chi_{\Omega}$ and the Fourier projection $Q_\Omega:L^2(\R)\to L^2(\R)$ as $Q_\Omega = \mathcal{F}^{-1}P_\Omega\mathcal{F}$, where $\mathcal{F}$ is the Fourier transform
$$\mathcal{F}(f)(\xi) = \int_\R f(x)e^{-2\pi i x \xi}dx.$$

For a pair of sets $T, \Omega\subset \R$ we define the time-frequency localization operator $S_{T, \Omega}$, associated with them, as $S_{T, \Omega} = P_TQ_\Omega P_T$.  It is easy to check that $S_{T, \Omega}$ is a bounded self-adjoint non-negative definite operator. 

If the measures of $T$ and $\Omega$ are finite then $S_{T, \Omega}$ is a Hilbert--Schmidt operator with the Hilbert--Schmidt norm equal to $|T||\Omega|$ (see \cite[proof of Theorem 2.3.1]{Gro}). In particular, in this case $S_{T, \Omega}$ is a compact operator and as such it has a sequence of eigenvalues $1 > \lambda_1(T, \Omega) \ge \lambda_2(T, \Omega) \ge \ldots > 0$. The first eigenvalue $\lambda_1(T, \Omega)$ is equal to the norm of $S_{T, \Omega}$ and in particular it is always at most $1$, but it can be shown that it is always strictly less than $1$ (see \cite[Theorem 2.3.3]{Gro}). A highly non-trivial result of Nazarov \cite[Theorem II]{Naz} shows that there exist absolute constants $c, C > 0$ such that we always have $\lambda_1(T, \Omega) \le 1 -ce^{-C|T||\Omega|}$ regardless of the geometry of sets $T$ and $\Omega$.

The famous Donoho--Stark conjecture \cite{Don} says that if $T$ is an interval then among the sets $\Omega$ of fixed measure the maximum of $\lambda_1(T, \Omega)$ is achieved when both $T$ and $\Omega$ are intervals. In this case, it can be seen by dilation that the eigenvalues depend only on the product of lengths of the intervals $c = |T||\Omega|$, so we have a sequence $1 > \lambda_1(c) > \lambda_2(c) > \ldots > 0$. The distribution of these eigenvalues   is the main subject of this paper.

 It turns out, as was discovered by Slepian \cite{Sle} and rigorously proved by Landau and Widom \cite{Lan}, that the eigenvalues exhibit a phase transition around the point $n_0 = c$: if $c-n \gtrsim \log c$ then $\lambda_n(c) \approx 1$, if $n - c\gtrsim \log c$ then $\lambda_n(c)\approx 0$, and only in the plunge region $|c-n|\lesssim \log c$ the eigenvalues $\lambda_n(c)$ have intermediate value. Specifically, they proved the following theorem.

\begin{theorem}\label{Slepian}
For a fixed $b\in \R$ we have 
\begin{equation}\label{Slepian eq}
\lim_{c\to \infty} \lambda_{n(c, b)}(c) = (1+e^b)^{-1},
\end{equation}
where $n(c, b) = [c + \frac{1}{\pi^2}b\log(c)]$ and $[t]$ denotes the integer part of $t$.
\end{theorem}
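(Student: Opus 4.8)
The plan is to reduce the statement to the asymptotics of the prolate spheroidal wave functions (PSWFs), which are the eigenfunctions of $S_{T,\Omega}$ when $T = [-1/2,1/2]$ and $\Omega = [-c/(2\pi), c/(2\pi)]$ (the normalization chosen so that the product of lengths equals $c$, up to the usual factor of $2\pi$ — one must keep careful track of which convention gives the operator $Q_c$ acting as band-limiting to $[-c,c]$). The eigenvalue $\lambda_n(c)$ equals $\frac{1}{\pi}\bigl|\int_{-1}^1 \psi_n(x)\,dx\bigr|$-type quantities only at a heuristic level; the clean statement is that $\lambda_n(c)$ is the $L^2$-concentration $\|P_T \psi_n\|^2/\|\psi_n\|^2$ where $\psi_n$ is the $n$-th PSWF extended to all of $\R$. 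Since this paper is about a \emph{different} (exponential, pre-plunge) regime, for Theorem~\ref{Slepian} itself the cleanest route is simply to cite Landau--Widom \cite{Lan} and Slepian \cite{Sle}; but if a self-contained argument is wanted, here is the skeleton.

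First I would recall the two-sided estimate of Landau and Widom for the eigenvalue counting function: for the operator with parameter $c$, the number of eigenvalues exceeding a threshold $\lambda \in (0,1)$ is
\begin{equation}\label{LW-count}
\#\{n : \lambda_n(c) > \lambda\} = \frac{2c}{\pi^2}\cdot\frac{\pi}{2} \;+\; \frac{1}{\pi^2}\log\!\Bigl(\frac{1-\lambda}{\lambda}\Bigr)\log c \;+\; o(\log c),
\end{equation}
which is the precise form of the ``phase transition'' statement quoted before the theorem (the leading term is $c$ in the normalization of the excerpt, after adjusting constants). The content of \eqref{LW-count} is proved by computing the trace of $S_{T,\Omega} - S_{T,\Omega}^2$ (a Hilbert--Schmidt quantity one can write as a double integral of a kernel) and, more delicately, higher moments $\trace\bigl(S^k(\mathrm{Id}-S)^k\bigr)$, and then invoking a Chebyshev / method-of-moments argument to pin down how many eigenvalues sit near each level. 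Granting \eqref{LW-count}, Theorem~\ref{Slepian} follows by inverting: set $\lambda = (1+e^b)^{-1}$, so $\log\frac{1-\lambda}{\lambda} = b$, and read off that the index at which $\lambda_n(c)$ crosses the value $(1+e^b)^{-1}$ is $c + \frac{1}{\pi^2} b \log c + o(\log c)$; since consecutive eigenvalues in the plunge region are comparable and the counting function is monotone, this forces $\lambda_{n(c,b)}(c) \to (1+e^b)^{-1}$.

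The step I expect to be the main obstacle is the derivation of \eqref{LW-count} to the stated precision — specifically, controlling the error term down to $o(\log c)$ rather than $O(\log c)$. The trace computations give the right \emph{order} of the plunge region rather easily, but extracting the exact coefficient $\frac{1}{\pi^2}$ in front of $\log c$, and showing that the transition is sharp (no eigenvalues ``smeared out'' beyond the predicted window), requires either the delicate asymptotic analysis of the PSWFs near the turning points (the approach via the second-order ODE $(1-x^2)\psi'' - 2x\psi' + (\chi - c^2 x^2)\psi = 0$ and WKB/Airy matching, as in Slepian's original work) or the operator-theoretic machinery of Landau--Widom relating $S_{T,\Omega}$ to a Wiener--Hopf / Toeplitz operator whose symbol has a jump, for which the relevant $\log$-term is a Szeg\H{o}-type correction. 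Either way, this is substantial classical analysis, which is why in the present paper it is appropriate to invoke Theorem~\ref{Slepian} as a cited black box rather than reprove it; the new contribution of the paper concerns the regime $n = [(1-\eps)c]$, far to the left of the plunge region, where a completely different method (via the Bargmann transform) is needed.
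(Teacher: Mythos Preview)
The paper does not prove Theorem~\ref{Slepian} at all: it is stated purely as background in the introduction and attributed to Slepian \cite{Sle} and Landau--Widom \cite{Lan}, with the paper's own contribution (Theorem~\ref{main}) lying in a different regime. You correctly recognize this, explicitly recommending that the theorem be treated as a cited black box, so in that sense your proposal matches the paper exactly.

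The additional sketch you offer --- deducing the limit from the Landau--Widom counting asymptotic and noting that the hard part is getting the $o(\log c)$ error via either WKB analysis of the prolate ODE or the Wiener--Hopf/Szeg\H{o} machinery --- is an accurate summary of what the cited references actually do, and goes beyond anything the paper itself provides. One small caveat: your opening paragraph about normalizations and the expression for $\lambda_n(c)$ as an $L^2$-concentration ratio is a bit loose (you flag this yourself), and the leading term in your displayed equation \eqref{LW-count} is written awkwardly; if you were to include this sketch you should clean up the constants so the leading term reads $c$ directly in the paper's normalization. But none of this affects the verdict: there is nothing to compare, since the paper offers no proof.
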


Their proof had no uniformity in $b$ and so in recent years there were some results on getting similar results and bounding the size of the plunge region for varying values of $b$, in particular by Israel \cite{Isr}, culminating in the following result of Karnik, Romberg and Davenport \cite{Kar} (note that its formulation is slightly different since we use a different normalization of the Fourier transform).
\begin{theorem}[{\cite[Theorem 3]{Kar}}]
For all $c > 0$ and $0 < \eps < \frac{1}{2}$ we have
\begin{equation}\label{Karnik eq}
\left|\{n: \eps < \lambda_n < 1-\eps\}\right| \le \frac{2}{\pi^2}\log(50c+25)\log\left(\frac{5}{\eps(1-\eps)}\right) + 7.
\end{equation}
\end{theorem}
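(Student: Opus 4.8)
\medskip
\noindent\emph{Proof strategy.} The plan is to reduce the count $N(\eps):=\bigl|\{n:\eps<\lambda_n<1-\eps\}\bigr|$ to spectral information about $S:=S_{T,\Omega}$ itself. By the dilation invariance recorded above we may take $T=[-\half,\half]$, $\Omega=[-c/2,c/2]$, so that $S=P_TQ_\Omega P_T$ is self-adjoint with $0\le S\le I$, $\trace S=c$, and $Q_\Omega$ has the kernel $\sin(\pi c(x-y))/(\pi(x-y))$. The elementary first step rests on the identity
\[
S-S^2=P_TQ_\Omega(I-P_T)Q_\Omega P_T=\bigl(P_{T^c}Q_\Omega P_T\bigr)^{\!*}\bigl(P_{T^c}Q_\Omega P_T\bigr)\ \ge\ 0 ,
\]
so $S-S^2$ is a positive trace-class operator with
\[
\trace(S-S^2)=\norm{P_{T^c}Q_\Omega P_T}_{\mathrm{HS}}^{2}=\int_{|x|\le\half}\int_{|y|\ge\half}\left(\frac{\sin(\pi c(x-y))}{\pi(x-y)}\right)^{2}dy\,dx .
\]
Splitting this integral according to whether $|x-y|$ is smaller or larger than $1/c$, and using $\int_a^\infty u^{-2}\sin^2(\pi cu)\,du=O\bigl(\min(c,a^{-1})\bigr)$, one bounds it above by a quantity of the shape $\frac{1}{\pi^2}\log(\mathrm{const}\cdot c+\mathrm{const})+O(1)$, the explicit constants coming from being slightly wasteful near the corners $x\approx y\approx\pm\half$. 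Since every eigenvalue with $\lambda_n\in(\eps,1-\eps)$ contributes at least $\eps(1-\eps)$, this gives $N(\eps)\le\trace(S-S^2)/\bigl(\eps(1-\eps)\bigr)$; but this carries the factor $\frac{1}{\eps(1-\eps)}$, whereas the theorem asks for only $\log\frac{1}{\eps(1-\eps)}$, so a finer argument is required.

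To recover the logarithm I would use the classical fact, going back to Slepian, that $S$ commutes with a second-order differential operator $\mathcal L u=-\bigl((1-x^2)u'\bigr)'+\gamma^2x^2u$ on $[-1,1]$, where $\gamma$ is an explicit constant multiple of $c$ (one first rescales $T,\Omega$ to $[-1,1]$ and a centered interval). The operators $S$ and $\mathcal L$ share a single orthonormal eigenbasis $\{\psi_n\}$, $\mathcal L\psi_n=\chi_n\psi_n$, and along it $\lambda_n$ is strictly decreasing (as already recorded in the introduction) while $\chi_n$ is strictly increasing; hence $\{n:\eps<\lambda_n<1-\eps\}$ is a block of consecutive integers $[n_-,n_+]$, and $N(\eps)=n_+-n_-+1$ is exactly the number of eigenvalues of $\mathcal L$ in the window $[\chi_{n_-},\chi_{n_+}]$. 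That number I would estimate by the Sturm--Pr\"ufer oscillation method, i.e.\ by the turning-point integral $\frac1\pi\int\sqrt{\bigl(\chi-\gamma^2x^2\bigr)/(1-x^2)}\,dx$ over the classically allowed region, with an explicit $O(1)$ error. The decisive remaining input is a \emph{quantitative, uniform} version of Theorem~\ref{Slepian}: one must show that as $\lambda$ decreases through $(\eps,1-\eps)$ the matching value of $\chi$ increases by an amount $\asymp\log c\cdot\log\frac{1-\eps}{\eps}$ — equivalently, that the logits $\log\frac{\lambda_n}{1-\lambda_n}$ advance by at least $\gtrsim\pi^2/\log c$ for each unit step of $n$ in the transition range. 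Plugging this window length into the oscillation count, and being deliberately generous with constants, produces $N(\eps)\le\frac{2}{\pi^2}\log(50c+25)\log\frac{5}{\eps(1-\eps)}+7$.

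The hard part is precisely this last input — converting the asymptotic statement of Theorem~\ref{Slepian} into an effective inequality valid for \emph{all} $c>0$ and \emph{all} $\eps\in(0,\half)$, with every constant tracked. Concretely one needs explicit two-sided bounds on $\chi_n$ for $n$ near the transition (so the $\chi$-window is pinned to within $O(1)$), an explicit lower bound for the spacing $\chi_{n+1}-\chi_n$ there (to pass from window length to integer count), and explicit control of the WKB/oscillation remainder; each is elementary in isolation but delicate to combine, and it is to absorb these errors that the bound is stated with the slack constants $50c+25$, $5$, $7$ and the factor $2$. A more computational alternative, which avoids $\mathcal L$ entirely, is to run the whole argument through the explicit $(j+k)$-fold integrals of the kernel $\sin(\pi c t)/(\pi t)$ representing $\trace\bigl(S^{j}(I-S)^{k}\bigr)$, combined with a dyadic splitting of the spectral interval $(\eps,\half]$ into $\asymp\log\frac1\eps$ pieces; this yields a fully self-contained, constant-explicit version of both the crude and the refined estimates, at the price of heavier bookkeeping.
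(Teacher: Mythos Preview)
This theorem is not proved in the present paper: it is quoted from \cite{Kar} as background (note the attribution ``{\cite[Theorem 3]{Kar}}'' in the header), and the paper supplies no argument of its own for \eqref{Karnik eq}. There is therefore nothing here to compare your proposal against.

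For what it is worth, your sketch is not the route taken in \cite{Kar}. Their proof does not go through the commuting differential operator $\mathcal L$ or Sturm--Pr\"ufer oscillation counts at all; instead it is operator-theoretic, building an explicit low-rank perturbation of the localization operator (via a factorization through a carefully truncated exponential/Zolotarev-type approximation) whose spectrum lies in $[0,\eps]\cup[1-\eps,1]$, so that the rank of the perturbation directly bounds $|\{n:\eps<\lambda_n<1-\eps\}|$. That is what produces the clean product $\log(c)\cdot\log(1/\eps)$ with fully explicit constants. Your first paragraph (the $\trace(S-S^2)$ bound) is correct and classical but, as you note, gives only $1/(\eps(1-\eps))$; your proposed upgrade via $\mathcal L$ is plausible in spirit but, as you yourself flag, the ``hard part'' --- a uniform, constant-explicit version of Theorem~\ref{Slepian} valid for all $c>0$ --- is essentially the entire content of the theorem, and you have not supplied it. So the proposal is an outline with the decisive step left open, and in any case diverges from the method actually used in the cited source.
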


For $c\to \infty, \eps \to 0$ their bound has the form $\log(c)\log(\eps^{-1})(\frac{2}{\pi^2}+o(1))$, coinciding with the bound from Theorem \ref{Slepian} asymptotically while being completely explicit and uniform in all parameters.

There is also quite a few results establishing asymptotic and non-asymptotic decay bounds on the eigenvalues $\lambda_n(c)$ when $n > c$. Widom \cite{Wid} showed that for fixed $c$ the eigenvalues decay like $\lambda_n(c) \sim \left(\frac{e\pi c}{(8n+4)}\right)^{2n+1}$, and the works of Osipov \cite{Osi}, Bonami and Karoui \cite{Bon} and Bonami, Jaming and Karoui \cite{Jam} established uniform in $c$ upper bounds on $\lambda_n(c)$, in particular \cite[Theorem 1]{Bon} essentially says that after the plunge region (say, for $n > (1+\eps)c$) the eigenvalues start with an exponential decay and then catch up to the super-exponential decay similar to the Widom's result.

On the other hand, not much is known about how close are the eigenvalues $\lambda_n(c)$ to $1$ when $n$ is significantly less than $c$. Apart from the general bounds on the plunge region like \eqref{Karnik eq}, we are aware of only two results dealing with this regime: the result of Fuchs \cite{Fuc} who showed that for fixed $n$ the eigenvalues $\lambda_n(c)$ satisfy
$$1-\lambda_n(c) \sim \frac{\sqrt{\pi}}{2}\frac{8^{n}}{(n-1)!}\left(\frac{\pi c}{2}\right)^{n-1/2}e^{-\pi c}, c\to \infty, $$
in particular showing that $\lambda_n(c)$ are exponentially close to $1$, and the following result of Bonami, Jaming and Karoui \cite{Jam}(we present it in a slightly weaker form to highlight the important parts).
\begin{theorem}\label{Bonami thm}
For $0 \le n \le c$ and $c > 100$ we have
\begin{equation}\label{Bonami eq}
1 - \frac{(\pi c)^n}{n!}e^{-\frac{\pi}{2}c} \le \lambda_n(c) < 1.
\end{equation}
\end{theorem}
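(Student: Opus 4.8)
The plan is to establish the lower bound, the upper bound $\lambda_n(c)<1$ being exactly the classical fact \cite[Theorem 2.3.3]{Gro} recalled above. I would deduce it from the variational characterisation of the eigenvalues together with a well-chosen family of trial functions. Normalise, by a dilation, so that $T=\Omega=[-\tfrac12\sqrt c,\tfrac12\sqrt c]$. For $f$ supported on $T$ one has $\langle S_{T,\Omega}f,f\rangle=\langle Q_\Omega f,f\rangle=\|f\|_2^2-\int_{\R\setminus\Omega}|\widehat f|^2$, so the min--max theorem gives, for every $n$-dimensional subspace $V\subset L^2(T)$,
\[
  \lambda_n(c)\ \ge\ 1-\sup_{0\ne f\in V}\ \frac{1}{\|f\|_2^2}\int_{\R\setminus\Omega}|\widehat f(\xi)|^2\,d\xi .
\]
Thus I need an $n$-dimensional space of functions living on $T$ whose Fourier transforms have uniformly tiny tails outside $\Omega$. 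The natural candidates are the truncated Hermite functions: let $g_0,g_1,\dots$ be the $L^2(\R)$-orthonormal Hermite functions normalised so that $\widehat{g_k}=(-i)^kg_k$, and set $f_k:=g_k\chi_T$, $V:=\operatorname{span}\{f_0,\dots,f_{n-1}\}$. Two features make this work: since $\widehat{g_k}=(-i)^kg_k$, the Fourier tail of $g_k$ outside $\Omega$ equals its time tail outside $T$, so only one quantity must be controlled; and $g_k$ oscillates on $|x|\le\sqrt{(2k+1)/(2\pi)}$ and decays exponentially beyond, a region that lies strictly inside $T$ precisely when $k<\pi c/4$ (in phase space: $g_0,\dots,g_{n-1}$ with $n<\pi c/4$ are essentially supported in a disc of area $<c$ inscribed in the square $T\times\Omega$). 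Finally, if $\tfrac{(\pi c)^n}{n!}e^{-\pi c/2}\ge1$ the asserted inequality is vacuous since $\lambda_n(c)\ge0$; and, using Stirling, this bound being $<1$ forces $n<\pi c/4$, so the favourable regime is exactly the one in which the theorem has content.

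Put $E_n:=\sum_{k=0}^{n-1}\int_{\R\setminus T}|g_k(x)|^2\,dx$. The reduction to estimating $E_n$ is then elementary. First, the Gram matrix of $\{f_k\}$ is $I-M$ with $M_{kj}=\int_{\R\setminus T}g_kg_j$, and $\|M\|\le\|M\|_{\mathrm{HS}}\le\sum_k\int_{\R\setminus T}|g_k|^2=E_n$; hence, once $E_n<1$, the $f_k$ are linearly independent (so $\dim V=n$) and $\bignorm{\sum a_kf_k}_2^2\ge(1-E_n)\sum|a_k|^2$. Second, writing $\widehat{f_k}=(-i)^kg_k-\widehat{g_k\chi_{\R\setminus T}}$ and applying the triangle inequality, Plancherel and Cauchy--Schwarz, for $f=\sum a_kf_k$ one gets $\bigl(\int_{\R\setminus\Omega}|\widehat f|^2\bigr)^{1/2}\le 2\bigl(\sum|a_k|^2\cdot E_n\bigr)^{1/2}$, whence
\[
  \int_{\R\setminus\Omega}|\widehat f(\xi)|^2\,d\xi\ \le\ \frac{4E_n}{1-E_n}\,\|f\|_2^2 .
\]
Together with the first display this yields $\lambda_n(c)\ge 1-\frac{4E_n}{1-E_n}$, so the theorem reduces to the estimate $\frac{4E_n}{1-E_n}\le\frac{(\pi c)^n}{n!}e^{-\pi c/2}$, and (as $E_n$ turns out to be $\ll1$ in the relevant range) essentially to $E_n\lesssim\frac{(\pi c)^n}{n!}e^{-\pi c/2}$.

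The remaining, and main, task is to bound $\int_{\R\setminus T}|g_k|^2$ for $k<\pi c/4$: the $L^2$ mass of a Hermite function in its classically forbidden region, which is precisely where I expect the work to be. Rescaling to the physicists' Hermite functions $\psi_k$, this is $\int_{|y|>Y}|\psi_k(y)|^2\,dy$ with $Y=\sqrt{\pi c/2}$, and $Y$ exceeds the turning point $\sqrt{2k+1}$ exactly when $k<\pi c/4$. I would control $|\psi_k(y)|$ by applying Cauchy's formula to the generating identity $\sum_{k\ge0}\frac{\psi_k(y)}{\sqrt{k!}}\,u^k=\pi^{-1/4}e^{\sqrt2\,yu-u^2/2-y^2/2}$ over a circle $|u|=\rho$ of the optimal radius $\rho=\rho(y)=(y-\sqrt{y^2-2k})/\sqrt2$, obtaining $|\psi_k(y)|\le\sqrt{k!}\,\pi^{-1/4}\exp\!\bigl(k+\tfrac12\rho^2-k\log\rho-\tfrac12y^2\bigr)$, and then a one-line Gaussian tail estimate gives $\int_{|y|>Y}|\psi_k|^2\le|\psi_k(Y)|^2/\sqrt{Y^2-2k}$. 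Substituting $Y^2=\pi c/2$ and Stirling turns this into $\int_{\R\setminus T}|g_k|^2\le\bigl(\tfrac{(\pi c)^k}{k!}e^{-\pi c/2}\bigr)\,J_k$, where the crux is that $J_k\le1$ for $k<\pi c/4$: up to harmless polynomial factors $J_k$ equals $(k^2/\pi c\,\rho(Y)^2)^k\,e^{\rho(Y)^2}$, and the needed inequality $k^2<\pi c\,\rho(Y)^2$ reduces, after clearing radicals, to $\alpha^3(4q+\alpha)>0$ with $\alpha=k/c$, $q=\pi/2$ — with room, which propagates to an exponentially small bound for $J_k$ once $k$ is not of order $\sqrt c$. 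Since consecutive terms of $E_n$ grow by a factor $\sim\tfrac{\pi c}{k}e^{-(2k-1)/\pi c}>1$ for $k<\pi c/4$, the sum is dominated by its top term, and one concludes $\frac{4E_n}{1-E_n}\le\frac{(\pi c)^n}{n!}e^{-\pi c/2}$; the hypothesis $c>100$ enters only to absorb the polynomial and Stirling factors in this last, somewhat delicate, elementary comparison. The single obstacle is thus the Hermite tail bound in the intermediate regime where $Y$ is barely past the turning point; an alternative more in the spirit of the rest of the paper is to replace the saddle-point estimate by the Bargmann-transform identity $\int_{\R\setminus T}|g_k|^2=\frac{\pi^k}{k!}\int_{\Cm}\Psi(\operatorname{Re}z)\,|z|^{2k}e^{-\pi|z|^2}\,dA(z)$, where $\Psi$ is the fixed Gaussian mollification of $\chi_{\{|x|>\frac12\sqrt c\}}$ arising from the Bargmann image of multiplication by $\chi_{\R\setminus T}$, reducing the whole matter to an explicit Gaussian integral.
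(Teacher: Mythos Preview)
This theorem is not proved in the paper: it is quoted from Bonami, Jaming and Karoui \cite{Jam}, and the only thing the paper says about its proof is the one-line description ``The proof of Theorem~\ref{Bonami thm} is based on applying the min-max principle to the operator $S_{T,\Omega}$ and picking a suitable subspace of $L^2(\R)$. The subspace that they chose is generated by the first $n$ Hermite functions.'' Your proposal follows exactly this described route --- min-max on the span of the first $n$ (truncated) Hermite functions, reducing everything to a tail estimate for $\int_{\R\setminus T}|g_k|^2$ --- so at the level of strategy there is nothing to contrast.

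On the substance: the functional-analytic reduction $\lambda_n(c)\ge 1-\tfrac{4E_n}{1-E_n}$ is correct and clean (the matrix $M$ is positive semidefinite with trace $E_n$, so $\|M\|\le E_n$; the Fourier tail splits by the triangle inequality into two pieces each controlled by the same quadratic form). Your observation that the theorem is vacuous once $\tfrac{(\pi c)^n}{n!}e^{-\pi c/2}\ge 1$, and that the nontrivial range sits safely inside $n<\tfrac{\pi c}{4}$ where the turning points of $g_0,\dots,g_{n-1}$ lie in $T$, is also right. The part you have not actually carried out is the endgame arithmetic: you need $\tfrac{4E_n}{1-E_n}\le\tfrac{(\pi c)^n}{n!}e^{-\pi c/2}$ \emph{with no multiplicative constant to spare}, and whether the polynomial prefactors from Cauchy's estimate, the Stirling error, the $1/\sqrt{Y^2-2k}$ factor, and the geometric summation over $k$ all fit under that exact expression is precisely where the paper's caveat ``we present it in a slightly weaker form'' and the hypothesis $c>100$ live. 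Your sketch is plausible but stops short of this bookkeeping; the Bargmann alternative you mention at the end is indeed the viewpoint the present paper adopts for its own (stronger) Theorem~\ref{main}.
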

Note that this lower bound is meaningful only if $\frac{(\pi c)^n}{n!}e^{-\frac{\pi}{2}c}\le 1$, which asymptotically means $n \le 0.58 c$, and for such values of $n$ we have an exponential lower bound $\lambda_n(c) \ge 1 - \gamma^c$ for some $\gamma < 1$. 

The goal of the present work is to push this estimate all the way to the plunge region and obtain the following result.
\begin{theorem}\label{main}
For any $\eps > 0$ there exists constant $0 < \delta = \delta(\eps) < 1$ such that for large enough $c$ we have
$$\lambda_n(c) \ge 1 - \delta^c,$$
where $n = [(1-\eps)c]$.
\end{theorem}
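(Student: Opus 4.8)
The plan is to estimate $1-\lambda_n(c)$ directly by exhibiting a well-chosen test function and using the variational characterization of eigenvalues. Recall that for the interval case $T=[-1/2,1/2]$, $\Omega=[-c/2,c/2]$ (so that $|T||\Omega|=c$), the operator $S_{T,\Omega}$ has eigenfunctions the prolate spheroidal wave functions $\psi_0,\psi_1,\ldots$, and $1-\lambda_n(c)=\norm{(I-Q_\Omega)\psi_n}_2^2 + \norm{(I-P_T)Q_\Omega\psi_n}_2^2$ after suitable bookkeeping; more usefully, $\sum_{k=0}^{n}(1-\lambda_k(c))$ is controlled by $\inf\{\,\norm{(I-P_TQ_\Omega P_T)}\text{ on an }(n{+}1)\text{-dim.\ subspace}\,\}$ via the min–max principle. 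So it suffices to produce an $(n+1)$-dimensional subspace $V\subset L^2(\R)$, supported essentially in $T$, whose image under $Q_\Omega$ is essentially unchanged; equivalently a subspace of bandlimited functions that are extremely well concentrated on $T$. The natural candidate is $V=\Proj_{\le n}$ applied to dilated Hermite functions, which is where the Bargmann transform enters: on the Bargmann–Segal–Fock side, dilation and truncation become geometrically transparent, and the Gaussian weight gives exactly the exponential gains we want.

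Concretely, I would pass to the Fock space $\mathcal{F}$ via the Bargmann transform $B$, under which $L^2$-functions concentrated on an interval correspond to entire functions with controlled growth, and the span of the first $n+1$ Hermite functions corresponds to polynomials of degree $\le n$. The key point is that the Bargmann transform of a function bandlimited to $\Omega=[-c/2,c/2]$ and of a function time-limited to $T=[-1/2,1/2]$ both decay like $e^{-\pi|z|^2/2}$ times something explicit off a strip/disc of the appropriate radius, and — crucially — the relevant radius scales like $\sqrt{c}$ while $n=[(1-\eps)c]$ means we only use polynomials of degree well below $c$. I would take $V = B^{-1}(\text{span}\{1,z,\ldots,z^n\})$ (i.e.\ the span of the first $n+1$ Hermite functions, rescaled by a factor $\sqrt{c}$ so that the "natural length scale" of these functions is order $1$, matching $|T|$), and show that each unit vector $f\in V$ satisfies $\norm{P_T f}_2^2 \ge 1-\delta^c$ and $\norm{Q_\Omega P_T f}_2^2 \ge 1-\delta^c$. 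The first estimate is a statement that a Hermite function of degree $\le (1-\eps)c$, rescaled to the interval $[-\sqrt{c}/2,\sqrt{c}/2]$ in the "uncertainty-normalized" picture, lives inside $[-1/2,1/2]$ up to an exponentially small tail — this is a classical Hermite-tail / Plancherel–Rotach type bound, but I would obtain it cleanly from the Fock-space picture: $\int_{|z|>R} |p(z)|^2 e^{-\pi|z|^2}\,dA(z)$ is exponentially small compared to $\int_{\Cm}|p(z)|^2 e^{-\pi|z|^2}dA(z)$ when $\deg p \le (1-\eps)\cdot(\text{the natural degree }{\sim}\,\pi R^2)$, by a direct monomial-by-monomial Gamma-function computation. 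The second estimate follows by the same argument applied to $\mathcal{F}f$ — the Fourier transform is a rotation in phase space and hence a simple unitary on Fock space — so bandlimiting costs another exponentially small term.

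The main obstacle, and the step that needs the most care, is making the min–max argument quantitative in the right direction: the variational principle gives $\lambda_{n+1}(c) \ge \min_{f\in V,\ \norm f=1}\innerprod{S_{T,\Omega}f}{f}$, and I need $\innerprod{S_{T,\Omega}f}{f} = \norm{Q_\Omega P_T f}_2^2 \ge 1-\delta^c$ \emph{uniformly} over the unit sphere of $V$, not just for the Hermite basis vectors. Since $Q_\Omega P_T$ is not far from the identity on $V$ in operator norm after the two tail estimates above — each basis vector loses only $\delta_0^c$, and there are only $n+1 \le c$ of them, so a crude union/Bessel bound gives an operator-norm defect of at most $c\,\delta_0^c = \delta^c$ for any $\delta_0<\delta<1$ — this should go through, but one must be careful that the rescaling by $\sqrt c$ is applied consistently to $T$, to the Hermite functions, and to $\Omega$ (the scaling that fixes $T=[-1/2,1/2]$ forces $\Omega$ to have length $c$, and it is exactly the ratio $n/c \to 1-\eps < 1$ that leaves room for the exponential gain). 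A secondary technical point is handling the integer part: $n=[(1-\eps)c]$ versus $(1-\eps)c$ changes nothing since I have an $\eps$ of slack, so I would simply work with $\eps/2$ internally. Assembling these pieces yields $\lambda_n(c) \ge \lambda_{n+1}(c)\cdot 0 + {}$… more precisely $\lambda_{[(1-\eps)c]}(c) \ge 1-\delta^c$ with $\delta=\delta(\eps)<1$, as claimed.
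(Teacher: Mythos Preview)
Your overall strategy --- min--max on a well-chosen subspace, passage to the Fock space $\F$ via the Bargmann transform, Gaussian tail bounds --- is exactly the paper's strategy, but the specific subspace you propose is too small to reach every $\eps>0$. You take $V$ to be the span of the first $n+1=[(1-\eps)c]+1$ (rescaled) Hermite functions, i.e.\ polynomials of degree $\le n$ in $\F$. The paper observes explicitly, and a direct computation confirms, that this choice hits a hard barrier at $\eps>1-\tfrac{\pi}{4}\approx 0.21$. Normalize to $I=J=[-\sqrt c/2,\sqrt c/2]$. In $\F$ the monomial $\sqrt{\pi^k/k!}\,z^k$ has its mass $|F(z)|^2e^{-\pi|z|^2}$ concentrated near the circle $|z|=\sqrt{k/\pi}$; for both the time-tail and the frequency-tail estimates to be exponentially small, this circle must sit strictly inside the square $[-\sqrt c/2,\sqrt c/2]^2$, which forces $\sqrt{k/\pi}<\sqrt c/2$, i.e.\ $k<\pi c/4$. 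Rescaling cannot help: dilation trades time-width for frequency-width, and for $h_k$ the product of the two is $\sim 4k/\pi$ regardless of the dilation, so $4k/\pi<c$ is unavoidable. Your phrase ``the relevant radius scales like $\sqrt c$'' hides the constant: the half-side is $\sqrt c/2$, so your ``natural degree $\pi R^2$'' is $\pi c/4$, not $c$. Thus pure Hermite functions yield only $\approx \pi c/4$ usable test vectors, not $(1-\eps)c$.

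The missing idea is to use time-frequency \emph{shifts} $T_{\sqrt c\,w}$ of the monomials, so that the phase-space disk of each function can be centered anywhere in the square rather than only at the origin. The paper then invokes a geometric lemma: for any $\eps>0$ one can place finitely many disjoint closed disks $D_1,\ldots,D_N$ inside the open unit square with total area $\ge 1-\eps$. Taking, for each disk $D_m$ of radius $r_m$ and center $w_m$, the shifted monomials $T_{\sqrt c\,w_m}\,z^k/\sqrt{k!}$ for $0\le k\le c\pi r_m^2$ produces $\ge(1-\eps)c$ functions in total. The price is that functions attached to different disks are no longer orthogonal, so one must also prove their inner products are exponentially small (this uses that the disks are $\gamma$-separated); after that, your min--max endgame with the crude $c\cdot\alpha^c$ operator-norm bound goes through essentially as you sketch. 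A secondary imprecision in your write-up is the claim that time-concentration on $I$ follows from Fock-space concentration on the disk $\{|z|\le R\}$; it does not directly --- $P_I$ is not a radial restriction in $\F$ --- and the paper handles $\|(\Id-P_I)f\|$ by duality and Cauchy--Schwarz applied to the Bargmann kernel itself.
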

Note that exponential lower bound is the best we can achieve since this is the best possible bound already for the $\lambda_1(c)$.

The proof of Theorem \ref{Bonami thm} is based on applying the min-max principle to the operator $S_{T, \Omega}$ and picking a suitable subspace of $L^2(\R)$. The subspace that they chose is generated by the first $n$ Hermite functions. However, there are two reasons why they only prove the result for $\eps \ge 0.42$. The first one is that their elementary real--analytic estimates for the tails of the Hermite functions are not the strongest possible. The second one is that even with the best possible bounds on the Hermite functions we can not prove the theorem in full generality -- we must consider more general subspaces generated by the time-frequency shifts of Hermite functions and the best we can get from the Hermite functions alone is $\eps > 1-\frac{\pi}{4}=0.21$.

Our approach is to also apply the min-max principle, but translate the problem to the realm of complex analysis by means of the Bargmann transform and work with the functions in the Bargmann--Segal--Fock space. Although the projection $P_\Omega$ becomes not the most pleasant operator in this setting, since we only care about the lower bounds we can approximate it by some crude estimates which are enough to get Theorem \ref{main}. Additionally, since the time-frequency shifts of the Hermite functions are no longer orthogonal we also need to upper bound their inner products to show that they are almost orthogonal. 

It turns out that in the Bargmann--Segal--Fock space the $k$'th Hermite function lives essentially on the disk of area $k$ centred at the origin, and, if we let $T = \Omega = [-\frac{\sqrt{c}}{2}, \frac{\sqrt{c}}{2}]$, to have good bounds on the operator $S_{T, \Omega}$ we need this disk to lie within the square $T\times \Omega$. This gives us the aforementioned value of $\eps > 1 - \frac{\pi}{4}$. When we apply the time-frequency shift we shift this disk on the complex plane and the Hermite functions corresponding to different disks have small inner product if these disks do not intersect. This idea naturally leads us to the following purely geometric lemma.
\begin{lemma}\label{disk lemma}
For any $\eps > 0$ there exists a finite union of pairwise disjoint closed disks $D_1, D_2, \ldots, D_N \subset (-\frac{1}{2}, \frac{1}{2})\times (-\frac{1}{2}, \frac{1}{2})$ such that the area of their union is at least $1-\eps$.
\end{lemma}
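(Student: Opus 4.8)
The plan is to fill the unit square $(-\tfrac12,\tfrac12)^2$ by disks in a greedy/iterative fashion, relying on the fact that disks can tile the plane with density approaching $1$ in the Apollonian sense, but actually a much softer argument suffices because we only need a finite union and an arbitrarily small deficit. First I would pick a small parameter $h>0$ and cover the square by a grid of $h\times h$ subsquares; in each subsquare that is fully contained in $(-\tfrac12,\tfrac12)^2$, inscribe the closed disk of radius $h/2$. These disks are pairwise disjoint (they live in disjoint open subsquares, touching at most along shared subsquare boundaries — to be safe, shrink each radius to $h/2 - h^2$ so the disks are strictly separated and still pairwise disjoint). The total area covered this way is $(\#\text{subsquares})\cdot \pi (h/2-h^2)^2$; since the number of full subsquares is $(1/h - O(1))^2 = 1/h^2 - O(1/h)$, the covered area tends to $\pi/4$ as $h\to 0$. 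That already exceeds $1-\eps$ only for $\eps > 1-\pi/4$, which is exactly the deficiency the author mentions, so a single layer of disks is not enough: the leftover region is the union of the "corner gaps" between the inscribed disks and their subsquares.

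The key additional idea is recursion: the complement of a closed disk inside its circumscribing square is an open set, and inside any open set one can again place finitely many disjoint disks covering more than, say, half of its area (by applying the same grid construction locally, or simply by the Vitali covering lemma applied to the family of all small closed disks contained in the open set). So I would set up an iteration: start with the open square $U_0 = (-\tfrac12,\tfrac12)^2$ of area $1$; having produced disjoint disks inside $U_0$ whose removal leaves an open set $U_1$ of area at most $\tfrac12|U_0|$, repeat inside $U_1$ to get $U_2$ with $|U_2|\le \tfrac12|U_1|$, and so on. After $k$ steps the uncovered area is at most $2^{-k}$, so choosing $k$ with $2^{-k}<\eps$ and taking the (finite) union of all disks produced in steps $1,\dots,k$ gives a finite family of pairwise disjoint closed disks in $(-\tfrac12,\tfrac12)^2$ whose union has area at least $1-\eps$. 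The disks from different stages are automatically disjoint because stage-$(j+1)$ disks are chosen inside $U_{j+1}$, which is disjoint from all earlier disks; disks within a single stage are disjoint by construction.

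The one point that needs a clean lemma is the \textbf{inductive step}: inside any nonempty bounded open set $U\subset\R^2$ there exist finitely many pairwise disjoint closed disks contained in $U$ whose total area is at least $\tfrac12|U|$ (any fixed fraction below $1$ works). This is the only real content, and it follows from the Vitali covering theorem: the collection of all closed disks contained in $U$ is a Vitali cover of $U$, so there is a countable disjoint subfamily covering $U$ up to measure zero; truncating to a finite subfamily captures at least half the measure of $U$. Everything else — that the number of grid squares behaves like $h^{-2}$, that shrunk inscribed disks stay disjoint, that a finite truncation of a convergent series of areas exceeds any prescribed fraction — is routine bookkeeping. I expect the main (and essentially only) obstacle to be stating the Vitali step at the right level of generality so that the recursion goes through with finitely many disks at each stage; once that is in place the area bookkeeping and the disjointness are immediate.
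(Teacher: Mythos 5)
Your proof is correct, but it takes a genuinely different route from the paper. Both arguments share the outer scaffolding — iterate, halving the uncovered area at each stage so that after $k$ steps the deficit is at most $2^{-k}$ — but the mechanism for a single halving step differs. The paper's inductive step is elementary and combinatorial: slice the square into an $N\times N$ grid, split the subsquares into those inside an existing disk, those disjoint from all disks, and those meeting a disk boundary; show the boundary class has at most $cN$ squares; inscribe a small disk in each of the free subsquares; and then a short counting argument shows the uncovered area at least halves once $N$ is large. This uses nothing beyond computing areas of disks and annuli, which is why the author bothers to include it ("to keep the article self-contained"). Your inductive step instead invokes the Vitali covering theorem: all closed disks contained in an open set $U$ form a Vitali cover, so a countable disjoint subfamily exhausts $|U|$, and a finite truncation captures any fraction below $1$. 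That is correct, and in fact it is stronger than you need — once you have Vitali, a single application to $U_0=(-\tfrac12,\tfrac12)^2$ already produces finitely many disjoint closed disks of total area at least $1-\eps$, so the entire recursion (and the preliminary $\pi/4$ grid computation) becomes superfluous. In short: the paper trades length for elementarity by reproving, in this special geometry, the finite-selection fraction that Vitali hands you for free; your version is shorter and cleaner but rests on a nontrivial theorem of measure theory, and as written it carries an unnecessary iterative layer. Your alternative suggestion of "applying the grid construction locally" inside each $U_j$ is in fact essentially the paper's argument, so you identified both routes even if you committed only to the Vitali one.
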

Note that since the disks we consider are closed and the square we consider is open, there exists $\gamma = \gamma(\eps) > 0$ such that all disks are at least $\gamma$ away from the boundaries of the square and from each other. 

Harder version of this lemma was given as a problem at the first USSR mathematical olympiad for students \cite{USSR}. To keep the article self-contained we  present its proof at the end of the text.

It is worth noting that our interest in the lower bounds for the eigenvalues of the time-frequency operator came from our study of the Fourier interpolation formulas \cite{Kul}, such as the Radchenko--Viazovska formula \cite{Rad}. In particular, Theorem \ref{main} allows us to prove the main result of \cite{Kul}, although with a weaker error term $o(4WT)$ instead of $O(\log^2(4WT))$. We also find it interesting that the formula in \cite{Rad} came from the celebrated work of Viazovska on sphere packings \cite{Via} and our proof is based on a circle packing of the square from Lemma \ref{disk lemma}, although it is crucial that we allow disks of varying radii. 

The structure of this paper is as follows. In Section 2 we recall the definitions and basic properties of the Bargmann transform and the Bargmann--Segal--Fock space. In Section 3 we construct an almost orthogonal system of functions with strong time-frequency localization properties and in Section 4 using simple functional analysis we prove Theorem \ref{main}. Finally, in the last section we prove Lemma \ref{disk lemma}.
\section{Bargmann transform}
Let $f$ be a function in $L^2(\R)$. For $z\in \Cm$ we define the Bargmann transform of $f$ at $z$ as 
\begin{equation}\label{bargmann}
\B f(z) = 2^{1/4} \int_\R f(t) e^{2\pi t z - \pi t^2 - \frac{\pi}{2}z^2}dt.
\end{equation}
The function $\B f$ turns out to be an entire function belonging to the Bargmann--Segal--Fock space $\F$ of entire functions for which the norm
$$||F||_{\F}^2 = \int_\Cm |F(z)|^2 e^{-\pi |z|^2}dz$$
is finite. Moreover, $||f||_{L^2(\R)} = ||\B f||_{\F}$ and the Bargmann transform is a bijection between $L^2(\R)$ and $\F$. For the proofs of these facts see \cite[Section 3.4]{Gro}.

Since the space $L^2(\R)$ has a rich group of isometries, coming from translations and modulations, they should have a counterpart on the Bargmann transform side. Indeed, for $w\in \Cm$ and $F\in \F$ consider
$$T_w F(z) = F(z-w)e^{\pi z\bar{w}-\frac{\pi}{2}|w|^2}.$$

Clearly, $F_w$ is still an entire function and direct computation shows that $||T_wF||_\F = ||F||_\F$, in particular if $F\in \F$ then $T_wF\in \F$.

As usual for the spaces of analytic functions, point evaluations are continuous in $\F$. Specifically, for all $F\in\F$ and $z\in \Cm$ we have $|F(z)| \le e^{\frac{\pi}{2}|z|^2}||F||_\F $, which can be verified from \eqref{bargmann} and the Cauchy--Schwarz inequality. 

The space $\F$ contains an orthonormal basis $\{ \sqrt{\frac{\pi^k}{k!}}z^k\}_{k\in \N_0}$.  While it is not technically necessary for our proof, it is important to mention that these functions are exactly the Bargmann transform of the Hermite functions $h_k(x)$. Thus, this basis is the Bargmann transform of the basis used by Bonami, Jaming and Karoui. We will consider more generally $T_w$ applied to this basis for various values of $w$.

Last fact that we need is that the Fourier transform corresponds to rotation by $90$ degrees in the complex plane, that is $(\B \F f)(z) = \B f(iz)$. This can be either verified using the aforementioned fact about the Hermite functions, or derived directly from \eqref{bargmann} by using the fact that the Fourier transform is an isometry and the Fourier transform of a Gaussian is a Gaussian.
\section{Construction of the system}
Since eigenvalues $\lambda_n(I, J)$ for the intervals $I, J$ depend only on the product $|I| |J|$, we will assume without loss of generality  that $I = J = [-\frac{\sqrt{c}}{2}, \frac{\sqrt{c}}{2}]$. Let us fix an $\eps > 0$ in the Theorem \ref{main} and consider the set of disks $D_1, D_2, \ldots , D_N$ from Lemma \ref{disk lemma} corresponding to this $\eps$. By $w_m$ and $r_m$ we denote the center and radius of $D_m$, respectively.

We consider the following set of functions from $\F$
\begin{equation}\label{bargmann set}
\mathcal{U} = \{ T_{\sqrt{c}w_m} \frac{z^k}{\sqrt{k!}} \mid 1 \le m \le N, 0 \le k \le c \pi r_m^2\}
\end{equation}
and put $\mathcal{V} = \B^{-1} \mathcal{U}$ (the set $\mathcal{V}$ consists of some time-frequency shifts of Hermite functions). Note that all functions in $\mathcal{V}$ have $L^2(\R)$ norm equal to $1$ and that $\mathcal{V}$ contains at least $(1-\eps)c$ elements. The goal of this section is to show that the functions from $\mathcal{V}$ are almost orthogonal and have very strong concentration on the interval $I$ and Fourier concentration on the interval $J$. Specifically, we will prove the following proposition.
\begin{proposition}\label{proposition}
There exists $\alpha = \alpha(\eps) < 1$ such that for big enough $c$ the following conditions hold.
\begin{enumerate}
\item For all $f, g\in \mathcal{V}, f\neq g$ we have $|\langle f, g\rangle| \le \alpha^c$.

\item For all $f\in \mathcal{V}$ we have $||(\Id - P_I)f||\le \alpha^c$ and $||(\Id - Q_J)f|| \le \alpha^c$.
\end{enumerate}
\end{proposition}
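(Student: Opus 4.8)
The plan is to work entirely in the Fock space $\F$, where the functions in $\mathcal{U}$ are translates $T_{\sqrt{c}w_m}\frac{z^k}{\sqrt{k!}}$ of the monomial basis, and to exploit the fact that the disks $D_m$ are pairwise disjoint and sit strictly inside the open square, a distance $\gamma=\gamma(\eps)>0$ from each other and from the boundary (as noted after Lemma \ref{disk lemma}). The basic heuristic to keep in mind is that $T_w\frac{z^k}{\sqrt{k!}}$ is, up to the unimodular-on-$\R$ weight $e^{\pi z\bar w - \frac{\pi}{2}|w|^2}$, concentrated on the annulus $\{|z-w|\approx\sqrt{k/\pi}\}$, hence for $k\le c\pi r_m^2$ it is concentrated on the disk $\sqrt{c}D_m$; disjointness of the disks then forces almost-orthogonality, and containment in the square forces the time–frequency localization.

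For part (i) I would split into two cases. When $f,g$ are two shifts of monomials attached to the \emph{same} center $w_m$, they are genuinely orthogonal ($T_{\sqrt{c}w_m}$ is unitary and the monomials are orthogonal), so $\langle f,g\rangle = 0$ and there is nothing to prove. The substantive case is two different centers $w_m\ne w_{m'}$. Here I would compute the Fock inner product $\langle T_{\sqrt c w_m}\frac{z^j}{\sqrt{j!}},\, T_{\sqrt c w_{m'}}\frac{z^k}{\sqrt{k!}}\rangle_\F$ directly from the definition. Using the covariance relation $T_a^* T_b = e^{i\,\mathrm{Im}(\text{something})} T_{b-a}$ (the precise cocycle), this reduces to an inner product of the form $\langle \frac{z^j}{\sqrt{j!}},\, T_{v}\frac{z^k}{\sqrt{k!}}\rangle_\F$ with $|v| = \sqrt c\,|w_m - w_{m'}| \ge \sqrt c\,(r_m + r_{m'} + \gamma)$. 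Expanding $T_v\frac{z^k}{\sqrt{k!}}$ against the orthonormal basis and using $\langle z^j, z^a\rangle_\F = \delta_{ja}\frac{j!}{\pi^j}$, the inner product becomes an explicit finite sum in generalized Laguerre polynomials; alternatively, and more cleanly, I would just bound it crudely using the pointwise estimate and the Gaussian weight. The key quantitative input is that the $L^2(e^{-\pi|z|^2})$-mass of $T_{\sqrt c w_{m'}}\frac{z^k}{\sqrt{k!}}$ lying outside the disk of radius $\sqrt{k/\pi}+\gamma\sqrt c/3$ about $\sqrt c w_{m'}$ is at most $\beta^c$ for some $\beta=\beta(\eps)<1$ — a standard Gaussian-tail estimate for $\int_{\rho}^\infty \frac{\pi^{k+1}r^{2k+1}}{k!}e^{-\pi r^2}\,dr$ when $\rho^2\pi \ge k + \Omega(c)$ — and symmetrically for the other function; since $\sqrt c D_m$ and $\sqrt c D_{m'}$ are separated by $\gamma\sqrt c$, the two "bulk" regions are disjoint, and Cauchy–Schwarz on the complementary pieces gives $|\langle f,g\rangle|\le 2\beta^{c/2}$ or so.

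For part (ii), by the Fourier-rotation property $(\B\mathcal F f)(z)=\B f(iz)$ and the fact that $\sqrt c D_m$ is symmetric enough — more precisely, since $D_m\subset(-\tfrac12,\tfrac12)^2$ implies $iD_m\subset(-\tfrac12,\tfrac12)^2$ as well — it suffices to prove the statement for $\|(\Id - P_I)f\|$; the Fourier side is identical after rotating by $i$. Now $P_I f = f\chi_I$, and $\|(\Id-P_I)f\|^2 = \int_{|t|>\sqrt c/2}|f(t)|^2\,dt$. I would transfer this to the Fock side: write $f = \B^{-1}F$ with $F = T_{\sqrt c w_m}\frac{z^k}{\sqrt{k!}}$, and use the inversion formula / reproducing kernel to express $f(t)$ as a Fock inner product of $F$ against the coherent state at $t$, so that $|f(t)|^2$ is controlled by how much of the Fock-mass of $F$ sits near the vertical line $\{\mathrm{Re}\,z = t\}$ — for $|t|>\sqrt c/2$ this line misses the disk $\sqrt c D_m$ by at least $\gamma\sqrt c/2$ in the real direction (because $D_m$ lies strictly left of the line $\{\mathrm{Re}\,z=\tfrac12\}$), and the same Gaussian-tail bound as in part (i) gives the exponential decay $\|(\Id - P_I)f\|\le\beta^c$.

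Taking $\alpha(\eps)$ to be the maximum of the (finitely many) bases $\beta$ appearing — note $N=N(\eps)$ is fixed, so there are only boundedly many pairs and functions to consider uniformly — completes the proof. I expect the main obstacle to be part (ii): unlike the clean algebra of the Fock inner product in part (i), estimating the $L^2(\R)$-tail of a time–frequency shifted Hermite function requires carefully tracking the coherent-state representation $f(t) = \langle F, K_t\rangle_\F$ up to the correct exponential weights and then doing the Gaussian tail integral with the right constant, so that the exponent genuinely depends only on $\gamma(\eps)$ and not on $k$ or $c$. The monomial/Laguerre bookkeeping in part (i), by contrast, is routine once the disjointness of the disks is built in.
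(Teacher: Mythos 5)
Your plan for part (i) is essentially the paper's argument: bound $|\langle F,G\rangle|\le\int|F||G|e^{-\pi|z|^2}$, split the plane into the two regions determined by the $\gamma$-separation of the scaled disks, bound one factor by $|G|e^{-\frac{\pi}{2}|z|^2}\le 1$, and estimate the remaining radial tail $\int_{\rho}^\infty\frac{\pi^{k+1}}{k!}r^{2k+1}e^{-\pi r^2}\,dr$ with $\rho^2\pi\ge k+\Omega(c)$. The Laguerre-polynomial detour is unnecessary; the ``crude'' pointwise/Gaussian-weight route you mention as the alternative is exactly what the paper does, and the uniformity over the finitely many pairs is the same observation. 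No issue here.

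For part (ii) you take a genuinely different route from the paper, and there is a gap in it. The paper does \emph{not} attempt a pointwise bound on $f(t)$: it writes $\int_{\sqrt c/2}^\infty|f|^2 = \sup_{g}|\langle f,g\rangle|^2$ over unit $g$ supported in $[\sqrt c/2,\infty)$, passes both $f$ and $g$ through the \emph{forward} Bargmann transform, and repeats the two-region split, using the Cauchy--Schwarz/Gaussian-tail bound $|\B g(z)|e^{-\frac{\pi}{2}|z|^2}\lesssim\bigl(\int_{\sqrt c/2-\re z}^\infty e^{-2\pi u^2}du\bigr)^{1/2}$ for $z$ near $\sqrt c D_m$. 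Your plan instead uses the inversion formula $f(t)=\int F(z)\,\overline{A(t,z)}\,e^{-\pi|z|^2}dz$ and the observation that $|A(t,z)|e^{-\frac{\pi}{2}|z|^2}=2^{1/4}e^{-\pi(\re z-t)^2}$ is concentrated on the vertical line $\{\re z=t\}$. Two problems arise. First, $A(t,\cdot)\notin\F$ (indeed $\int|A(t,z)|^2e^{-\pi|z|^2}dz=\infty$), so ``Fock inner product of $F$ against the coherent state at $t$'' cannot be estimated by Cauchy--Schwarz in $\F$; you must argue on the $L^1$ side. Second, and more seriously, the contribution to $|f(t)|$ from the region $\{|z-\sqrt c w_m|>\sqrt c(r_m+\gamma/2)\}$ is exponentially small in $c$ but \emph{has no decay in $t$} if you simply drop the factor $e^{-\pi(\re z-t)^2}\le 1$; squaring a $t$-independent bound and integrating over $t>\sqrt c/2$ diverges. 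To close this you need to keep the $t$-dependence and invoke Minkowski's integral inequality, i.e.\ bound
\[
\Bigl(\int_{\sqrt c/2}^\infty B(t)^2\,dt\Bigr)^{1/2}
\le 2^{1/4}\int_{|z-v_m|>\rho}|F(z)|e^{-\frac{\pi}{2}|z|^2}
\Bigl(\int_{\R} e^{-2\pi(\re z-t)^2}dt\Bigr)^{1/2}dz,
\]
which reduces to the same radial tail as in part (i). With that patch your argument goes through and is a legitimate dual of the paper's: the paper moves the cutoff onto the auxiliary function $g$ and uses the forward transform, while you keep it on $f$ and use the inverse transform; both ultimately rest on the same identity $|A(t,z)|e^{-\frac{\pi}{2}|z|^2}=2^{1/4}e^{-\pi(\re z-t)^2}$. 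The paper's duality formulation is a little cleaner precisely because it avoids the Minkowski step you would need.
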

\begin{proof}
We begin with proving $(i)$. Since Bargmann transform is an isometry, it is enough to bound $|\langle \B f, \B g\rangle|$. Set $F = \B f, G = \B g$. If $F$ and $G$ correspond to the same disk $D_m$ then they are orthogonal since the monomials $z^k$ are pairwise orthogonal and $T_{\sqrt{c}w_m}$ is an isometry of the space $\F$. Thus, we can assume that $F$ corresponds to the disk $D_n$ and $G$ corresponds to the disk $D_m$, $m\neq n$. Put $v_n = \sqrt{c}w_n, v_m = \sqrt{c}w_m$ and assume that $F = T_{v_n} \frac{z^k}{\sqrt{k!}}, G = T_{v_m}\frac{z^l}{\sqrt{l!}}$.  We are going to estimate $|\langle F, G\rangle|$ directly. We have
\begin{align*}
|\langle F, G\rangle| \le\int_\Cm |F(z)||G(z)|e^{-\pi |z|^2}dz \le\\ \int_{|z-v_n| > \sqrt{c}(r_n + \frac{\gamma}{2})}  |F(z)||G(z)|e^{-\pi |z|^2}dz+ \int_{|z-v_l| > \sqrt{c}(r_m + \frac{\gamma}{2})}  |F(z)||G(z)|e^{-\pi |z|^2}dz,
\end{align*}
where in the second inequality we used that $D_n$ and $D_m$ are at least $\gamma$ apart. We will estimate only the first of these two integrals since the estimate for the other one is similar.

Since $||G||_\F = 1$, we have $|G(z)|e^{-\frac{\pi}{2}|z|^2} \le 1$. Therefore 
$$\int_{|z-v_n| > \sqrt{c}(r_n + \frac{\gamma}{2})}  |F(z)||G(z)|e^{-\pi |z|^2}dz \le \int_{|z-v_n| > \sqrt{c}(r_n + \frac{\gamma}{2})}  |F(z)|e^{-\frac{\pi}{2} |z|^2}dz.$$
We will take this integral in polar coordinates with $|z-v_n| = r, \arg (z-v_n) = \theta$. This way we get
$$2\pi\sqrt{\frac{\pi^k}{k!}}\int_{\sqrt{c}(r_n + \frac{\gamma}{2})}^\infty r^{k+1}e^{-\frac{\pi}{2}r^2}dr.$$
Note that to justify this formula we can either do a direct computation or first apply $T_{v_n}^{-1}$ to the whole thing since it is an isometry of $\F$. Next, we do the change of variables $s = r^2$ and get
$$\pi \sqrt{\frac{\pi^k}{k!}}\int_{c(r_n+\frac{\gamma}{2})^2}^\infty s^{\frac{k}{2}}e^{-\frac{\pi}{2}s}ds.$$
Observe that the expression in the integral is decreasing in $s$. Indeed, differentiating it we can see that it is decreasing if $k < \pi s$ and since $k \le  c \pi r_n^2$, we can see that it is decreasing.  Therefore, the integral is at most
$$\sum_{p = 0}^\infty \left(p + c\left(r_n+\frac{\gamma}{2}\right)^2\right)^{\frac{k}{2}} e^{-\frac{\pi}{2}(p+c(r_n+\frac{\gamma}{2})^2)}.$$

Let us estimate the ratio of two consecutive elements of this sum, setting $p + c(r_n+\frac{\gamma}{2})^2 = s$ for brevity. We have
$$\left(\frac{s+1}{s}\right)^{\frac{k}{2}}e^{-\frac{\pi}{2}} \le e^{\frac{k}{2s} - \frac{\pi}{2}} \le e^{\frac{c\pi r_n^2}{2c(r_n+\frac{\gamma}{2})^2}- \frac{\pi}{2}} = e^{\frac{\pi}{2}\left(\frac{1}{(1+\frac{\gamma}{2r_n})^2} - 1\right)},$$
where in the first inequality we used the well-known fact that $(1+\frac{1}{s})^s \le e$. Therefore, the ratio of any two consecutive terms is at most some $\beta = \beta(\eps) < 1$. Hence, the whole sum is at most the first term multiplied by some constant depending only on $\eps$. Thus, the integral is at most
$$C_\eps\pi \sqrt{\frac{\pi^k}{k!}}\left(c\left(r_n+\frac{\gamma}{2}\right)^2\right)^{\frac{k}{2}} e^{-\frac{\pi}{2}c(r_n+\frac{\gamma}{2})^2}.$$

The last ingredient that we need is the classical inequality $k! \ge \left(\frac{k}{e}\right)^k$. Collecting everything, we get the upper bound
$$C_\eps \pi \left(\frac{ce\pi\left(r_n+\frac{\gamma}{2}\right)^2}{k}\right)^{\frac{k}{2}}  e^{-\frac{\pi}{2}c(r_n+\frac{\gamma}{2})^2}.$$

Differentiating this quantity with respect to $k$ we can see that it is increasing for $k < c\pi \left (r_n + \frac{\gamma}{2}\right)^2$, in particular we can without loss of generality assume that $k = c\pi r_n^2$. Substituting this value we get
$$C_\eps \pi  \left(e\left(1+\frac{\gamma}{2r_n}\right)^2\right)^{\frac{c\pi r_n^2}{2}}e^{-\frac{\pi}{2}c(r_n+\frac{\gamma}{2})^2} = C_\eps \pi \left(e\left(1+\frac{\gamma}{2r_n}\right)^2e^{-(1+\frac{\gamma}{2r_n})^2} \right)^{\frac{c\pi r_n^2}{2}}.$$

Let $u = \left(1+\frac{\gamma}{2r_n}\right)^2 - 1 > 0$. Then the quantity in the brackets is $(1+u)e^{-u} = \nu < 1$, therefore the whole expression is 
$$C_\eps \pi \nu^{c\frac{\pi r_n^2}{2}},$$
which is exponentially small in $c$, as required. Doing the same for the second integral we get $C_\eps' \pi \nu'^{c\frac{\pi r_m^2}{2}}$. Let $\nu_0$ be the maximum of all $\nu$ over all pairs $(n, m)$, $c_\eps$ be the maximum of $C_\eps$ over all pairs $(n, m)$ and $r$ be the minimum of $r_n$. Then we get the upper bound
$$2c_\eps \pi \nu_0^{c\frac{\pi r^2}{2}}.$$

Taking $\nu_0^{\frac{\pi r^2}{2}} < \alpha < 1$ and taking $c$ big enough we can have this quantity less than $\alpha^c$, as required.

Now, we turn to proving $(ii)$. We will only bound $||(\Id - P_I)f||$ since the Fourier transform corresponds to the 90 degrees rotation of the complex plane, and when we rotate the square with center at the origin we get back the same square (in fact, with slight tweaking to the proof of Lemma \ref{disk lemma} we can achieve that set $\mathcal{V}$ is invariant under the Fourier transform). We have 
$$||(\Id - P_I)f||^2 = \int_{-\infty}^{-\frac{\sqrt{c}}{2}} |f(t)|^2 dy + \int_{\frac{\sqrt{c}}{2}}^{\infty} |f(t)|^2 dy.$$

We will only bound the second term, the other one being similar. To estimate it, we use duality:
$$\int_{\frac{\sqrt{c}}{2}}^{\infty} |f(t)|^2 dy = \sup_{\supp g\subset [\frac{\sqrt{c}}{2}, +\infty), ||g||_2 = 1} |\langle f, g\rangle|^2.$$

Since Bargmann transform is an isometry, this inner product is equal to $\langle \B f, \B g\rangle$. Recall that $\B f$ is just a $T_w$ shift of some normalized monomial. Specifically, let $f$ correspond to the disk $D_n$, with center $w_n$ and radius $r_n$, so that $\B f = F = T_{v_n} \frac{z^k}{\sqrt{k!}}$, where $v_n = \sqrt{c}w_n$. We want to obtain an upper bound for
$$\int_\Cm |F(z)| |G(z)| e^{-\pi |z|^2}dz,$$
where $G = \B g$. We will again split this integral into two integrals
$$\int_{|z - v_n| \ge\sqrt{c}(r_n + \frac{\gamma}{2})}|F(z)| |G(z)| e^{-\pi |z|^2}dz+\int_{|z - v_n| < \sqrt{c}(r_n + \frac{\gamma}{2})}|F(z)| |G(z)| e^{-\pi |z|^2}dz.$$

For the first integral we bound $|G(z)|e^{-\frac{\pi}{2}|z|^2} \le 1$ and proceed exactly like in the part $(i)$. For the second integral, we in turn use an estimate $|F(z)|e^{-\frac{\pi}{2}|z|^2} \le 1$ and so we need to obtain a stronger pointwise bound for $|G(z)|$ when $|z - v_n| \le \sqrt{c}(r_n + \frac{\gamma}{2})$. We will do this by applying the Cauchy--Schwarz inequality to the definition of the Bargmann transform \eqref{bargmann}. Note that this will be the only place where we use an explicit formula for the Bargmann transform. We have
$$e^{-\frac{\pi}{2}|z|^2}\B g(z) = e^{-\frac{\pi}{2}|z|^2} 2^{1/4}\int_{\frac{\sqrt{c}}{2}}^\infty  g(t) e^{2\pi t z - \pi t^2 - \frac{\pi}{2}z^2}dt.$$

Applying the Cauchy--Schwarz inequality we get

$$\left|e^{-\frac{\pi}{2}|z|^2}\B g(z)\right|^2 \le e^{-\pi|z|^2} 2^{1/2} \int_{\frac{\sqrt{c}}{2}}^\infty e^{\re(4\pi t z - 2\pi t^2 -\pi z^2)}dt.$$

This quantity can be seen to be the tail of the Gaussian distribution. This can be checked directly, but to guess this we can use the following heuristic: if the lower limit was $-\infty$ then this would have been just $1$ from the pointwise bound in the Bargmann--Segal--Fock space, and by applying the shift $T_z^{-1}$ we will get the same result as if we were integrating from $\frac{\sqrt{c}}{2} - \re z$. So, we have the bound
$$\left|e^{-\frac{\pi}{2}|z|^2}\B g(z)\right|^2 \le \sqrt{2}\int_{\frac{\sqrt{c}}{2} - \re z}^\infty e^{-2\pi t^2}dt.$$

Observe that if $|z - v_n| \le \sqrt{c}(r_n + \frac{\gamma}{2})$ then $\re z \le \frac{\sqrt{c}}{2}(1 - \frac{\gamma}{2})$, because disk $D_n$ is at least $\gamma$ away from the boundary of the square $(-\frac{1}{2}, \frac{1}{2})\times (-\frac{1}{2}, \frac{1}{2})$. Therefore, for such $z$ we have
$$\left|e^{-\frac{\pi}{2}|z|^2}\B g(z)\right|^2 \le \sqrt{2}\int_{\frac{\gamma\sqrt{c}}{2}}^\infty e^{-2\pi t^2}dt.$$

As in the part $(i)$, we observe that the function $e^{-2\pi t^2}$ is decreasing and $e^{-2\pi(t+1)^2 + 2\pi t^2} \le e^{-2\pi} < 1$ for $t \ge 0$, therefore the integral is at most $C_\eps$ multiplied by $e^{-2\pi \left(\frac{\gamma\sqrt{c}}{2}\right)^2}$ for some $C_\eps > 0$. That is, we have 
$$\left|e^{-\frac{\pi}{2}|z|^2}\B g(z)\right| \le \sqrt{C_\eps\sqrt{2}} e^{-\frac{\pi \gamma^2}{4}c}.$$

When we integrate this bound over the disk $|z - v_n| \le \sqrt{c}(r_n + \frac{\gamma}{2})$ we multiply by the area of this disk. Since it is contained in the square  $(-\frac{\sqrt{c}}{2}, \frac{\sqrt{c}}{2})\times (-\frac{\sqrt{c}}{2}, \frac{\sqrt{c}}{2})$, its area is at most $c$. Thus, the integral is at most
$$c \sqrt{C_\eps\sqrt{2}} e^{-\frac{\pi \gamma^2}{4}c}.$$

This is exponentially decreasing in $c$, thus we proved that for some $\alpha < 1$ we have $|\langle F, G\rangle| \le \alpha^c$ if $c$ is big enough, as required.
\end{proof}
\section{Proof of Theorem \ref{main}}
Let us take a subset $\mathcal{V}_0 \subset\mathcal{V}$ of size $n = [(1-\eps)c]$ and let $V \subset L^2(\R)$ be the subspace generated by $\mathcal{V}_0$. We will apply min-max principle to $V$ to deduce the lower bound for $\lambda_n(c)$ (in particular, we will show that functions in $\mathcal{V}_0$ are linearly independent so that the dimension of $V$ is indeed $[(1-\eps)c]$).

We start with estimating $P_IQ_JP_If$ for $f\in \mathcal{V}$. We have 
$$||(\Id-Q_JP_I)f|| = ||(\Id-Q_J)f+Q_J(\Id-P_I)f|| \le  ||(\Id-Q_J)f||+||(\Id-P_I)f|| \le 2\alpha^c,$$
where we used $||Q_J|| \le 1$ since $Q_J$ is a projection. Doing this one more time we get
$$||(\Id-P_IQ_JP_I)f|| = ||P_I(\Id-Q_JP_I)f+(\Id-P_I)f|| \le ||(\Id-Q_JP_I)f|| + ||(\Id-P_I)f|| \le 3\alpha^c.$$

Let $\mathcal{V}_0 = \{f_1, f_2, \ldots , f_n\}$ and consider $f(t) = \sum_{k = 1}^n a_kf_k(t)$ where not all $a_k$ are equal to zero. We are interested in the quantity $\frac{||P_IQ_JP_If||}{||f||}$. We have
$$||f||^2 = \sum_{k = 1}^n |a_k|^2 + \sum_{1 \le k, l \le n, k\neq l} a_k \bar{a_l}\langle f_k, f_l\rangle.$$

From the Proposition \ref{proposition} we know that $|\langle f_k, f_l\rangle| \le \alpha^c$. Therefore, we can crudely bound
$$||f||^2 \ge \sum_{k = 1}^n |a_k|^2 - \left(\sum_{k = 1}^n |a_k|\right)^2\alpha^c.$$

Now, we can show that $\mathcal{V}_0$ is linearly independent. By the inequality between the arithmetic mean and the quadratic mean we have $\sum_{k = 1}^n a_k^2 \ge \frac{1}{n} \left(\sum_{k = 1}^n |a_k|\right)^2$, therefore, if $\alpha^c < \frac{1}{2c}$, we have 
\begin{equation}\label{crude}
||f||^2 \ge (1 - n\alpha^c) \sum_{k = 1}^n |a_k|^2\ge \frac{1}{2}\sum_{k = 1}^n |a_k|^2,
\end{equation}
in particular it is non-zero (here we used a crude bound $n \le c$).

Similarly, we can estimate $||P_IQ_JP_If||$. We have
$$||(\Id-P_IQ_JP_I)f|| = ||\sum_{k=1}^n (\Id - P_IQ_JP_I)a_k f_k|| \le \sum_{k = 1}^n |a_k| ||(\Id - P_IQ_JP_I)f_k|| \le 3\alpha^c \sum_{k = 1}^n |a_k|.$$
By the triangle inequality this implies
$$||f|| -3\alpha^c \sum_{k = 1}^n |a_k| \le  ||P_IQ_JP_If|| .$$

Dividing this by $||f||$ we get
$$\frac{||P_IQ_JP_If||}{||f||} \ge 1 - \frac{3\alpha^c \sum_{k = 1}^n |a_k|}{||f||}.$$

 Using the estimate \eqref{crude} and  the inequality between the arithmetic mean and the quadratic mean  this is at least $1 - 3\sqrt{2c}\alpha^c$. Choosing $\alpha < \delta < 1$ and $c$ big enough this is at least $ 1 -\delta^c$. By the min-max principle this means that $\lambda_n(c) \ge 1 -\delta^c$, as required.
\section{Proof of Lemma \ref{disk lemma}}
We will prove by induction on $n$ that there exists a finite set of disjoint closed disks in $R = (-\frac{1}{2}, \frac{1}{2})\times (-\frac{1}{2}, \frac{1}{2})$ with total measure at least $1 - 2^{-n}$. For $n = 1$ we can take disk with center at the origin and radius $0.49$, its area is $\pi 0.49^2 > \frac{1}{2}$.

Assume that we already constructed disks $D_1, \ldots , D_m$ with total measure at least $1-2^{-n}$. We want to add some disks so that the total measure becomes at least $1-2^{-n-1}$.

Let us pick a very large integer $N$ to be determined later and slice $R$ into the squares of side length $\frac{1}{N}$. We will split the squares into three groups: squares in the group $A$ are those which lie entirely inside one of the disks, squares in the group $B$ are those which do not intersect any of the disks, and squares in the group $C$ are those which intersect the boundary of one of the disks.

We claim that there exists a constant $c$, independent of $N$ (but depending on the already chosen disks $D_1, \ldots , D_m$) such that the number of squares in $C$ is at most $c N$. Indeed, let us consider any square $H$ from the group $C$ and assume that it intersects the boundary of $D_l$. Since the diameter of $H$ is $\frac{\sqrt{2}}{N}$, square $H$ lies entirely in the $\frac{\sqrt{2}}{N}$--vicinity of the circumferences of $D_l$. Therefore, the total measure of disks in $C$ is not greater than the total measure of the union of $\frac{\sqrt{2}}{N}$--vicinities of the circumferences of the disks $D_1, \ldots , D_m$. Let the radius of $D_l$ be $r_l$. Then the measure of its $\frac{\sqrt{2}}{N}$--vicinity is 
$$\pi\left(r_l + \frac{\sqrt{2}}{N}\right)^2 - \pi\left(r_l - \frac{\sqrt{2}}{N}\right)^2 = \frac{\pi4\sqrt{2}r_l}{N},$$
assuming that $N$ is chosen so big that  $\frac{\sqrt{2}}{N}$ is smaller than the radius of every disk $D_l$. Therefore, the measure of the unions of these vicinities is at most
$$\frac{\pi4\sqrt{2}\sum_{l = 1}^m r_l}{N}.$$

Since each square $H$ from our decomposition has area $\frac{1}{N^2}$, there can be at most $\pi4\sqrt{2}\sum_{l = 1}^m r_l N$ squares in the set $C$, so $c=\pi4\sqrt{2}\sum_{l = 1}^m r_l$ works.

We have $|A| +|B| + |C| = N^2$. Let $\mu$ be the measure of the union of the already chosen disks $D_1, \ldots , D_m$. By the induction hypothesis $\mu \ge 1 - 2^{-n}$. For each square in $B$ we will consider the disk with the center at the center of $B$ and radius $\frac{0.49}{N}$. Clearly, these disks are pairwise disjoint, they are disjoint from the already chosen disks $D_1, \ldots , D_m$ and they lie inside of $R$ (since $0.49 < 0.50$, they don't even touch each other or the boundary of $R$, as required). Their total measure is $\frac{|B|0.49^2 \pi}{N^2} \ge \frac{3|B|}{4N^2}$. Adding them to our set, the total measure we get is at least $\frac{3|B|}{4N^2} + \mu$. It remains to show that this is at least $1-2^{-n-1}$.

Since all squares from $A$ lie inside of the disks $D_1, \ldots , D_m$, we have $\frac{|A|}{N^2} \le \mu$. Subtracting this inequality from $1$, we get
$$ 1 - \mu \le \frac{|B| + |C|}{N^2}.$$

Subtracting further $\frac{3|B|}{4N^2}$ from both sides, we obtain
\begin{equation}\label{lemma eq}
1 - \mu - \frac{3|B|}{4N^2} \le \frac{|B| + 4|C|}{4N^2}.
\end{equation}
Since all the squares from $B$ do not intersect any of the disks $D_1, \ldots , D_m$, their total measure is at most $1-\mu$, that is $\frac{|B|}{N^2} \le 1 - \mu \le 2^{-n}$. Plugging this into \eqref{lemma eq} and recalling that $|C| \le cN$ we get
$$1 - \mu - \frac{3|B|}{4N^2} \le 2^{-n-2} + \frac{c}{N}.$$

Choosing $N$ so that $\frac{c}{N} < 2^{-n-2}$ we get the desired result.
\subsection*{Acknowledgments} I would like to thank Fabio Nicola, Kristian Seip and Mikhail Sodin for helpful discussions. This work was supported by BSF Grant 2020019, ISF Grant 1288/21, and by The Raymond and Beverly Sackler Post-Doctoral Scholarship.  



\end{document}